\font\smallit=cmti10
\font\smalltt=cmtt10
\renewcommand\section{\@startsection {section}{1}{\z@}
{-30pt \@plus -1ex \@minus -.2ex}
{2.3ex \@plus.2ex}
{\normalfont\normalsize\bfseries\boldmath}}
\renewcommand\subsection{\@startsection{subsection}{2}{\z@}
{-3.25ex\@plus -1ex \@minus -.2ex}
{1.5ex \@plus .2ex}
{\normalfont\normalsize\bfseries\boldmath}}
\renewcommand{\@seccntformat}[1]{\csname the#1\endcsname. }
\newtheorem{theorem}{Theorem}
\newtheorem{lemma}{Lemma}
\theoremstyle{definition}
\newtheorem{definition}{Definition}[section]
\begin{document}

\begin{center}
\uppercase{\bf A Variant of Game of Sliding Coins}
\vskip 20pt
{\bf Ryohei Miyadera }\\
{\smallit Keimei Gakuin Junior and High School, Kobe City, Japan}\\
{\tt runnerskg@gmail.com}
\vskip 10pt
{\bf Hikaru Manabe}\\
{\smallit Keimei Gakuin Junior and High School, Kobe City, Japan}\\
{\tt urakihebanam@gmail.com}
\vskip 10pt
{\bf Unchon Lee}\\
{\smallit Keimei Gakuin Junior and High School, Kobe City, Japan}\\
{\tt infinitemenmae@gmail.com}
\vskip 10pt
\end{center}
\vskip 20pt
\centerline{\smallit Received: , Revised: , Accepted: , Published: } % We will fill in the dates
\vskip 30pt

\centerline{\bf Abstract}
\noindent
Here, we present a variant of the sliding coins game. Two coins are placed on distinct squares of a semi-infinite linear board with squares numbered $0, 1, 2, dots, $. Two players take turns and move a coin to a lower unoccupied square. When a coin is pushed to the outside of the linear board, the players cannot use this coin anymore. 
In this game, we have another operation of moving coins: moving the coin on the right leftward and pushing the coin on the left. This last operation complicates this game's mathematical structure, but we have managed to make formulas for Grundy numbers. Since all the positions of this game of two coins are the next player's winning position, this game of two coins is trivial as a game. However, by making the sum of two games, we get a meaningful game in which the player who plays for the last time is the winner. 
With these Grundy numbers formulae, we get the winning strategy.

\pagestyle{myheadings} 
\markright{\smalltt   (  )\hfill} 
\thispagestyle{empty} 
\baselineskip=12.875pt 
\vskip 30pt

%As for Fig. 1  is there a space between  Fig. and 1?

\section{A game of sliding coins}\label{introductionsection}
Let $\mathbb{Z}_{\geq0}$ and $\mathbb{N}$ be the sets of nonnegative and natural numbers, respectively.

\begin{comment}
\begin{figure}[H]
\centering
\includegraphics[height=5cm]{twocoins.eps}
\caption{How to move to P-positions}\label{positions0}
\end{figure}
\end{comment}

\begin{definition}\label{definitionofstrap}
There are two coins, and two players take turns and move one of the coins to leftward as in Figures \ref{twocoin1} and \ref{twocoin2}, or move the coin on the right to leftward and push also the coin on the left leftward as in Figure \ref{twocoin3}.
Players can also push one of the coins or both coins over the edge as in Figures \ref{twocoin4} and \ref{twocoin5a}. 
Players cannot move the coin on the right over the coin on the left. See Figure \ref{twocoin5}
\end{definition}

\begin{definition}\label{definitionofstrap2}
We use two  of the game in Definition \ref{definitionofstrap} to make the game of four coins in Figure \ref{twodirectiongame}. Players take turns and move coins on the left strap leftward, and coins on the right strap rightward. The player who plays for the last time is the winner.
\end{definition}

\begin{figure}[H]
\begin{tabular}{cc}
\begin{minipage}[t]{0.43\textwidth}
\begin{center}
\includegraphics[height=0.9cm]{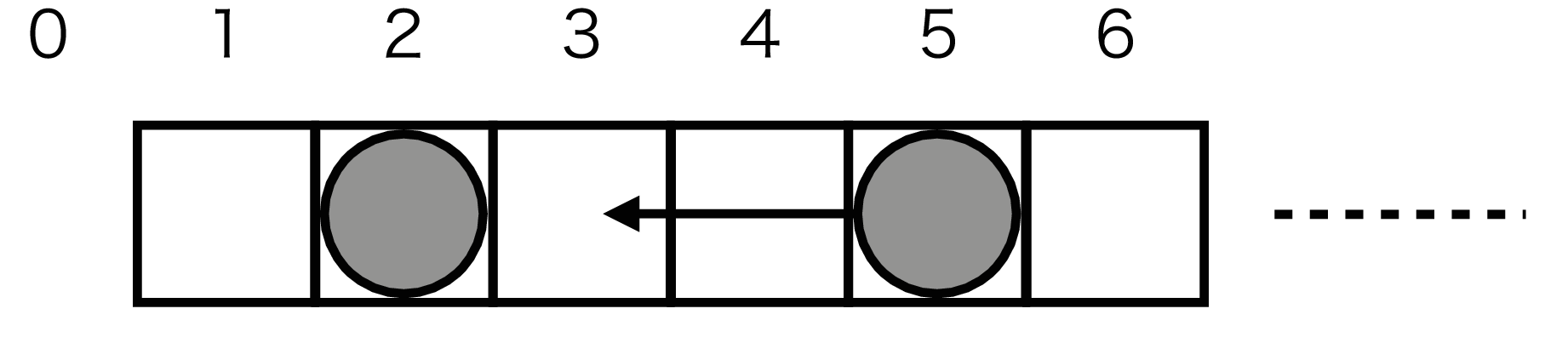}
\caption{}
\label{twocoin1}
\end{center}
\end{minipage}
\begin{minipage}[t]{0.43\textwidth}
\begin{center}
\includegraphics[height=0.9cm]{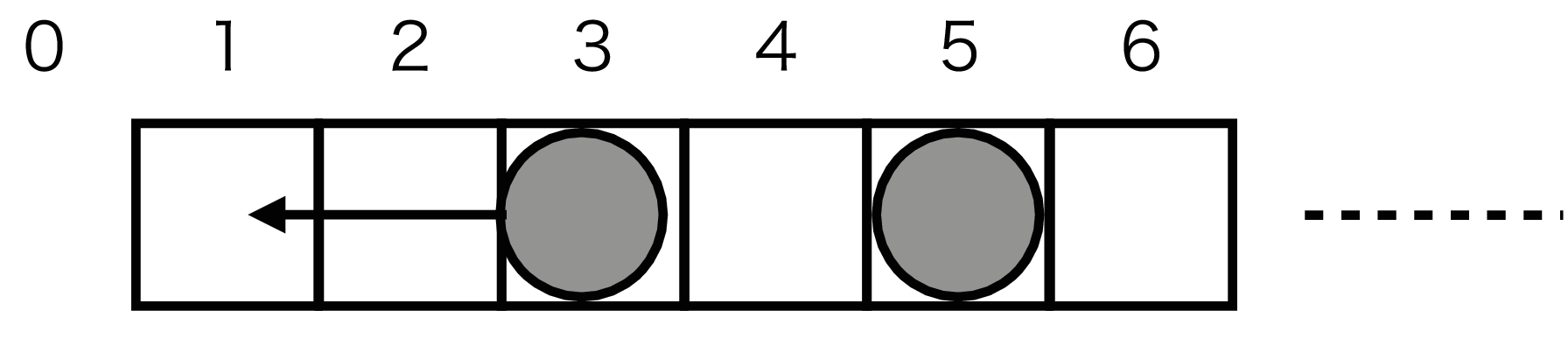}
\caption{}
\label{twocoin2}
\end{center}
\end{minipage}
\end{tabular}
\end{figure}

\begin{figure}[H]
\begin{tabular}{cc}
\begin{minipage}[t]{0.43\textwidth}
\begin{center}
\includegraphics[height=0.9cm]{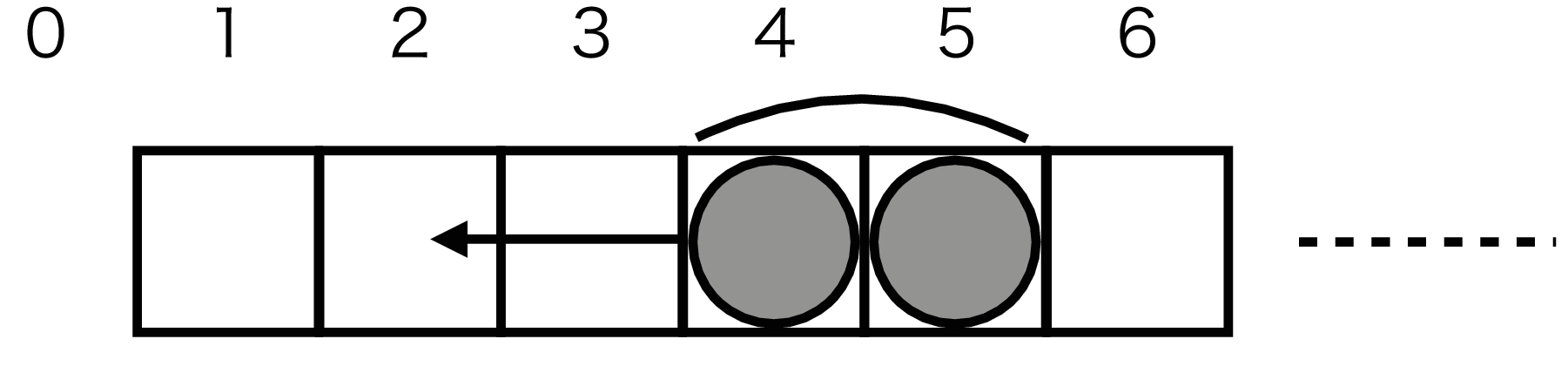}
\caption{}
\label{twocoin3}
\end{center}
\end{minipage}
\begin{minipage}[t]{0.43\textwidth}
\begin{center}
\includegraphics[height=0.9cm]{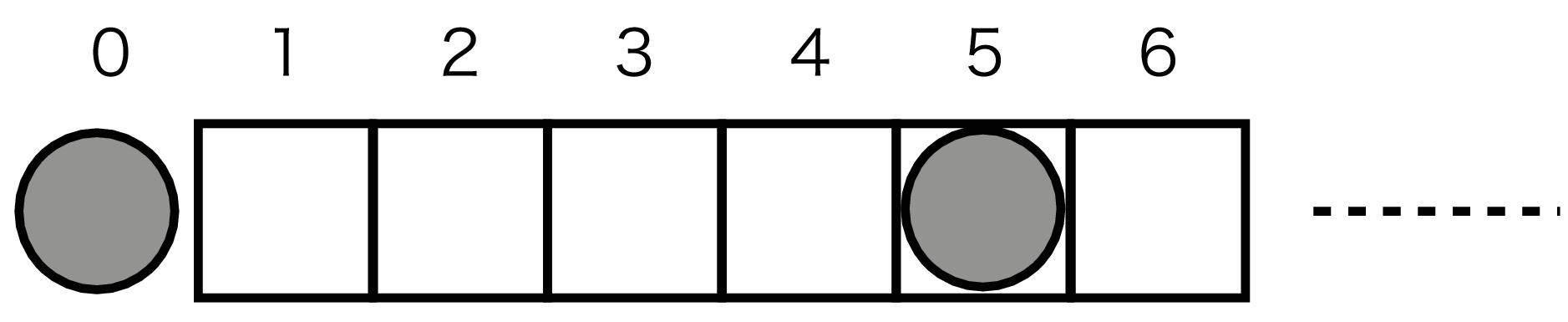}
\caption{}
\label{twocoin4}
\end{center}
\end{minipage}
\end{tabular}
\end{figure}

\begin{figure}[H]
\begin{tabular}{cc}
\begin{minipage}[t]{0.43\textwidth}
\begin{center}
\includegraphics[height=0.9cm]{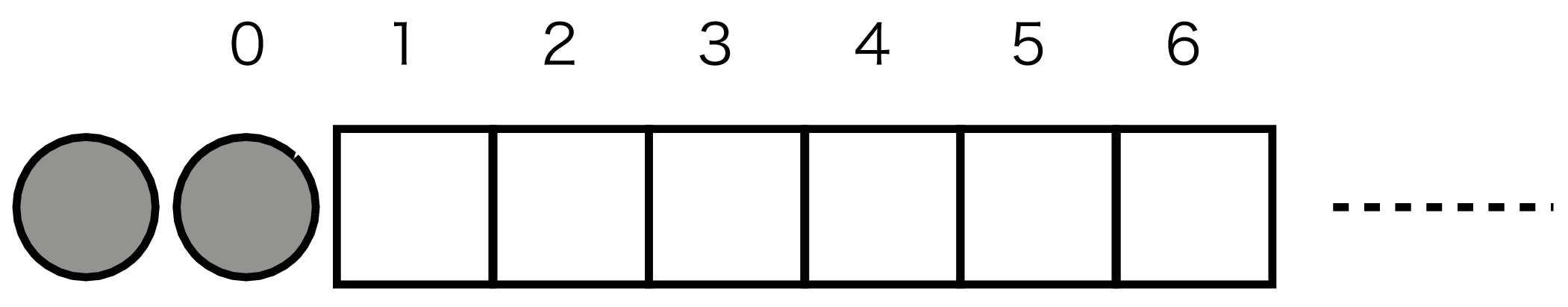}
\caption{}
\label{twocoin5a}
\end{center}
\end{minipage}
\begin{minipage}[t]{0.43\textwidth}
\begin{center}
\includegraphics[height=1.1cm]{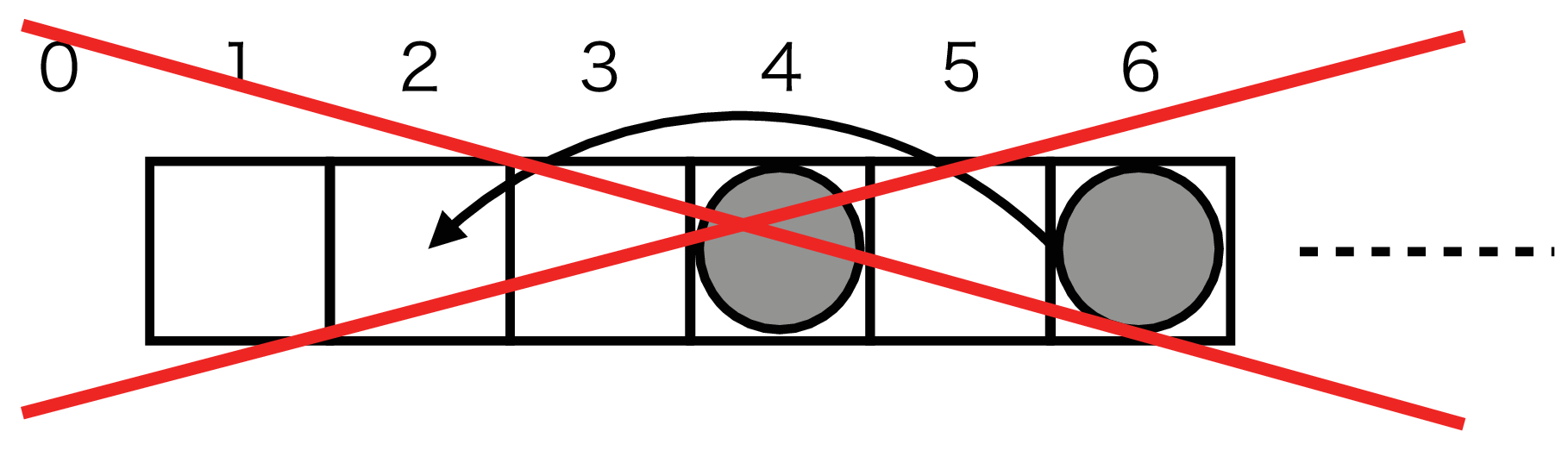}
\caption{}
\label{twocoin5}
\end{center}
\end{minipage}
\end{tabular}
\end{figure}

\begin{figure}[H]
\centering
\includegraphics[height=0.9cm]{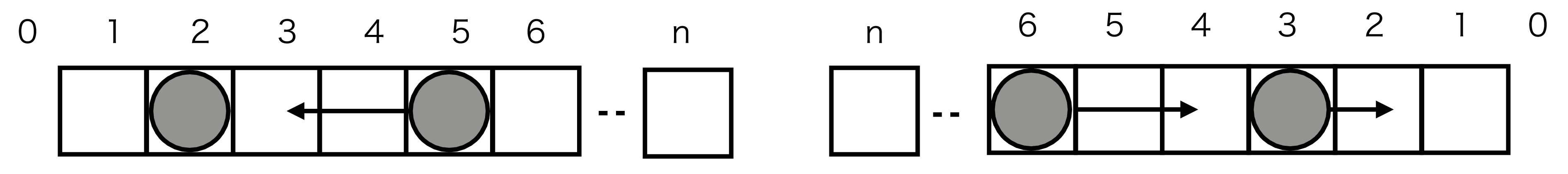}
\caption{A game with two directions}\label{twodirectiongame}
\end{figure}

\section{Combinatorial Game Theory Definitions and Theorems}\label{defandthem}
For completeness, we briefly review some of the necessary concepts of combinatorial game theory by referring to $\cite{lesson}$ and $\cite{combysiegel}$. 

\begin{definition}\label{definitionfonimsum11}
	Let $x$ and $y$ be nonnegative integers. Expressing them in base 2, 
$x = \sum_{i=0}^n x_i 2^i$ and $y = \sum_{i=0}^n y_i 2^i$ with $x_i,y_i \in \{0,1\}$.
	We define \textit{Nim-sum}, $x \oplus y$, as follows:
	\begin{equation}
		x \oplus y = \sum\limits_{i = 0}^n {{w_i}} {2^i},\nonumber
	\end{equation}
	where $w_{i}=x_{i}+y_{i} \ (\bmod\ 2)$.
\end{definition}

As chocolate bar games are impartial games without drawings, only two outcome classes are possible.
\begin{definition}\label{NPpositions}
	$(a)$ A position is referred to as a $\mathcal{P}$-\textit{position} if it is a winning position for the previous player (the player who has just moved), as long as they play correctly at each stage.\\
	$(b)$ A position is referred to as an $\mathcal{N}$-\textit{position} if it is the winning position for the next player, as long as they play correctly at each stage.
\end{definition}

\begin{definition}\label{sumofgames}
	The \textit{disjunctive sum} of the two games, denoted by $\mathbf{G}+\mathbf{H}$, is a super game in which a player may move either in $\mathbf{G}$ or $\mathbf{H}$ but not in both.
\end{definition}

\begin{definition}\label{defofmove}
	For any position $\mathbf{p}$ in game $\mathbf{G}$, a set of positions can be reached by one move in $\mathbf{G}$, which we denote as \textit{move}$(\mathbf{p})$. 
\end{definition}

\begin{definition}\label{defofmexgrundy}
	$(i)$ The \textit{minimum excluded value} ($\textit{mex}$) of a set $S$ of nonnegative integers is the least nonnegative integer that is not in S. \\
	 $(ii)$ Let $\mathbf{p}$ be the position in the impartial game. The associated \textit{Grundy number} is denoted by $G(\mathbf{p})$ and is 
 recursively defined by 
	$G(\mathbf{p}) = \textit{mex}\{G(\mathbf{h}): \mathbf{h} \in move(\mathbf{p})\}.$
\end{definition}

The next result demonstrates the usefulness of the Sprague-Brundy theory in impartial games.
\begin{theorem}\label{theoremofsumg}
Let $\mathbf{G}$ and $\mathbf{H}$ be impartial rulesets, and $G_{\mathbf{G}}$ and $G_{\mathbf{H}}$ be the Grundy numbers of game $\mathbf{g}$ played under the rules of $\mathbf{G}$ and game $\mathbf{h}$ played under those of $\mathbf{H}$. Thus, we obtain the following:\\
	$(i)$ For any position $\mathbf{g}$ in $\mathbf{G}$, 
	$G_{\mathbf{G}}(\mathbf{g})=0$ if and only if $\mathbf{g}$ is the $\mathcal{P}$-position.\\
	$(ii)$ The Grundy number of positions $\{\mathbf{g},\mathbf{h}\}$ in game $\mathbf{G}+\mathbf{H}$ is
	$G_{\mathbf{G}}(\mathbf{g})\oplus G_{\mathbf{H}}(\mathbf{h})$.
\end{theorem}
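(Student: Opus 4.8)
The plan is to prove both parts by well-founded induction on positions, which is legitimate because the rulesets are impartial and admit no draws, so every sequence of moves terminates after finitely many steps and there is no infinite descending play. I would establish part $(i)$ first, then lean on the $\textit{mex}$ recursion of Definition \ref{defofmexgrundy} to obtain part $(ii)$.

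For part $(i)$ I would induct on the length of the longest play from $\mathbf{g}$. In the base case $\mathbf{g}$ is terminal, so $\textit{move}(\mathbf{g}) = \emptyset$ and $G_{\mathbf{G}}(\mathbf{g}) = \textit{mex}(\emptyset) = 0$; since the player to move cannot move, $\mathbf{g}$ is a $\mathcal{P}$-position, and the equivalence holds. For the inductive step I would use the standard outcome characterisation: a position is a $\mathcal{P}$-position exactly when every move leads to an $\mathcal{N}$-position, and an $\mathcal{N}$-position exactly when some move leads to a $\mathcal{P}$-position. If $G_{\mathbf{G}}(\mathbf{g}) = 0$ then $0 \notin \{G_{\mathbf{G}}(\mathbf{h}) : \mathbf{h} \in \textit{move}(\mathbf{g})\}$, so by the inductive hypothesis no move reaches a $\mathcal{P}$-position and $\mathbf{g}$ is a $\mathcal{P}$-position; conversely, if $G_{\mathbf{G}}(\mathbf{g}) \neq 0$ then $0$ lies in that set, so some move reaches a position of Grundy number $0$, i.e.\ a $\mathcal{P}$-position, making $\mathbf{g}$ an $\mathcal{N}$-position.

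For part $(ii)$, write $a = G_{\mathbf{G}}(\mathbf{g})$ and $b = G_{\mathbf{H}}(\mathbf{h})$, and induct on $\{\mathbf{g},\mathbf{h}\}$ in $\mathbf{G}+\mathbf{H}$. By Definition \ref{defofmexgrundy} it suffices to show that the Grundy numbers of positions in $\textit{move}(\{\mathbf{g},\mathbf{h}\})$ have $\textit{mex}$ equal to $a \oplus b$, which splits into two claims: no move reaches a position of Grundy number $a \oplus b$, and for every $c$ with $0 \le c < a \oplus b$ some move reaches a position of Grundy number $c$. A move in $\mathbf{G}+\mathbf{H}$ alters exactly one component (Definition \ref{sumofgames}); say it replaces $\mathbf{g}$ by $\mathbf{g}'$, so by the inductive hypothesis the result has Grundy number $G_{\mathbf{G}}(\mathbf{g}') \oplus b$. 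For the first claim, $G_{\mathbf{G}}(\mathbf{g}') \oplus b = a \oplus b$ would force $G_{\mathbf{G}}(\mathbf{g}') = a$ by cancellation of the Nim-sum, contradicting the fact that the $\textit{mex}$ definition of $a$ forbids a move from $\mathbf{g}$ to a position of Grundy number $a$; the symmetric argument handles moves in $\mathbf{H}$.

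The crux, and the step I expect to be the main obstacle, is the second claim. Given $c < a \oplus b$, I would set $d = (a \oplus b) \oplus c \neq 0$ and let $2^k$ be its highest set bit. Since $c < a \oplus b$, this bit is set in $a \oplus b$ and hence in exactly one of $a$ or $b$; say in $a$. Then flipping that bit strictly decreases $a$, so $a \oplus d < a$, and the $\textit{mex}$ property of $a$ guarantees a move from $\mathbf{g}$ to some $\mathbf{g}'$ with $G_{\mathbf{G}}(\mathbf{g}') = a \oplus d$; the inductive hypothesis then gives a reachable position of Grundy number $(a \oplus d) \oplus b = c$. The delicate point is the bit-level verification that the leading bit of $d$ lies in $a \oplus b$ and in one summand, together with the Nim-sum identities $x \oplus x = 0$ and cancellation. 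Once these are in place, part $(ii)$ follows; combined with part $(i)$ it shows that $\{\mathbf{g},\mathbf{h}\}$ is a $\mathcal{P}$-position exactly when $a = b$.
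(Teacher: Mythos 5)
Your argument is the standard Sprague--Grundy proof and is correct: part $(i)$ by well-founded induction using the outcome characterisation, and part $(ii)$ via the two $\textit{mex}$ claims, with the leading-bit argument for reaching every $c < a \oplus b$ handled properly. The paper itself gives no proof of this theorem---it defers to the cited textbooks---and your proof is essentially the one found there, so there is nothing to compare beyond noting that you have supplied what the paper omits.
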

See \cite{lesson} or \cite{combysiegel} for the proof of this theorem.\\

\section{Sliding coins games}
First, we study Grundy numbers of the game of Definition \ref{definitionofstrap}, and after that we study the game of Definition \ref{definitionofstrap2}.
\begin{definition}\label{defofgrundy}
$(i)$  For  $n \in \mathbb{N}$, let
\begin{equation}
G_{n,1}=\{(a+   \frac{2(n-2)}{3}, a+  \frac{4(n-2)}{3}+2):
a \in \mathbb{Z}_{\geq0} \} \text{ when } n=2 \pmod{3}
\end{equation}
and 
\begin{equation}
G_{n,1}=\emptyset  \text{ when  } n \ne 2 \pmod{3}.
\end{equation}
$(ii)$  For  $n \in \mathbb{N}$, let
\begin{equation}
G_{n,2}=\{(a, n+ \lfloor \frac{a+1}{2} \rfloor):a \in \mathbb{Z}_{\geq0},
a+ \lfloor \frac{a+1}{2} \rfloor +1 \leq n \text{ and } \lfloor \frac{a}{2} \rfloor = n  \ (\bmod\ 2) \}.
\end{equation}
$(iii)$  For  $n \in \mathbb{N}$, let
\begin{equation}
G_{n,3}=\{(a, n- \lfloor \frac{a+1}{2} \rfloor):a \in \mathbb{Z}_{\geq0},
a+ \lfloor \frac{a+1}{2} \rfloor +1 \leq n \text{ and } \lfloor \frac{a}{2} \rfloor = n+1  \ (\bmod\ 2) \}.
\end{equation}
\end{definition}

\begin{lemma}\label{lemma1}
$(i)$ Suppose that  $n=6m+2$ for 
$m \in \mathbb{Z}_{\geq0}$. Then, we have the following.\\
\begin{equation}
G_{n,1}=\{(a+4m,a+8m+2):a \in \mathbb{Z}_{\geq0} \}.\label{lemma1i}
\end{equation}
\begin{align}
G_{n,2}= & \{(4t,6m+2t+2):t \in \mathbb{Z}_{\geq0} \text{ and } 6t \leq 6m+ 4\} \nonumber \\
& \cup \{(4t+1,6m+2t+3):t \in \mathbb{Z}_{\geq0} \text{ and } 6t \leq 6m+2 \}.
\end{align}
\begin{align}
G_{n,3}= & \{(4t+2,6m-2t+1):t \in \mathbb{Z}_{\geq0} \text{ and } 6t \leq 6m-1 \} \nonumber \\
& \cup \{(4t+3,6m-2t):t \in \mathbb{Z}_{\geq0} \text{ and } 6t \leq 6m-3 \}.
\end{align}
$(ii)$ Suppose that $n=6m+5$ for  
$m \in \mathbb{Z}_{\geq0}$. Then, we have the following.\\
\begin{equation}
G_{n,1}=\{(a+4m+2,a+8m+6):a \in \mathbb{Z}_{\geq0} \}.\label{lemma1ii}
\end{equation}
\begin{align}
G_{n,2}= & \{(4t+2,6m+2t+6):t \in \mathbb{Z}_{\geq0} \text{ and } 6t \leq 6m+1 \} \nonumber \\
& \cup \{(4t+3,6m+2t+7):t \in \mathbb{Z}_{\geq0} \text{ and } 6t \leq 6m-1 \}.
\end{align}
\begin{align}
G_{n,3}= & \{(4t,6m-2t+5):t \in \mathbb{Z}_{\geq0} \text{ and } 6t \leq 6m+4 \} \nonumber \\
& \cup \{(4t+1,6m-2t+4):t \in \mathbb{Z}_{\geq0} \text{ and } 6t \leq 6m+2 \}.
\end{align}
$(iii)$ Suppose that $n = 6m$ for
$m \in \mathbb{Z}_{\geq0}$.\\
\begin{equation}
G_{n,1}=\emptyset.
\end{equation}
\begin{align}
G_{n,2}= & \{(4t,6m+2t):t \in \mathbb{Z}_{\geq0} \text{ and } 6t \leq 6m-1 \} \nonumber \\
& \cup \{(4t+1,6m+2t+1):t \in \mathbb{Z}_{\geq0} \text{ and } 6t \leq 6m-3 \}.
\end{align}
\begin{align}
G_{n,3}= & \{(4t+2,6m-2t-1):t \in \mathbb{Z}_{\geq0} \text{ and } 6t \leq 6m-4 \} \nonumber \\
& \cup \{(4t+3,6m-2t-2):t \in \mathbb{Z}_{\geq0} \text{ and } 6t \leq 6m-6 \}.
\end{align}
$(iv)$ Suppose that $n = 6m+1$ for
$m \in \mathbb{Z}_{\geq0}$.\\
\begin{equation}
G_{n,1}=\emptyset.
\end{equation}
\begin{align}
G_{n,2}= & \{(4t+2,6m+2t+2):t \in \mathbb{Z}_{\geq0} \text{ and } 6t \leq 6m-3 \} \nonumber \\
& \cup \{(4t+3,6m+2t+3):t \in \mathbb{Z}_{\geq0} \text{ and } 6t \leq 6m-5 \}.
\end{align}
\begin{align}
G_{n,3}= &
\{(4t,6m-2t+1):t \in \mathbb{Z}_{\geq0} \text{ and } 6t \leq 6m \} \nonumber \\
& \cup \{(4t+1,6m-2t):t \in \mathbb{Z}_{\geq0} \text{ and } 6t \leq 6m-2 \}.
\end{align}
$(v)$ Suppose that $n = 6m+3$ for
$m \in \mathbb{Z}_{\geq0}$.\\
\begin{equation}
G_{n,1}=\emptyset.
\end{equation}
\begin{align}
G_{n,2}= & \{(4t+2,6m+2t+4):t \in \mathbb{Z}_{\geq0} \text{ and } 6t \leq 6m-1 \} \nonumber \\
& \cup \{(4t+3,6m+2t+5):t \in \mathbb{Z}_{\geq0} \text{ and } 6t \leq 6m-3 \}.
\end{align}
\begin{align}
G_{n,3}= &
\{(4t,6m-2t+3):t \in \mathbb{Z}_{\geq0} \text{ and } 6t \leq 6m+2 \} \nonumber \\
& \cup \{(4t+1,6m-2t+2):t \in \mathbb{Z}_{\geq0} \text{ and } 6t \leq 6m \}.
\end{align}
$(vi)$ Suppose that $n = 6m+4$ for
$m \in \mathbb{Z}_{\geq0}$.\\
\begin{equation}
G_{n,1}=\emptyset.
\end{equation}
\begin{align}
G_{n,2}= & \{(4t,6m+2t+4):t \in \mathbb{Z}_{\geq0} \text{ and } 6t \leq 6m+3 \} \nonumber \\
& \cup \{(4t+1,6m+2t+5):t \in \mathbb{Z}_{\geq0} \text{ and } 6t \leq 6m+1 \}.
\end{align}
\begin{align}
G_{n,3}= &
\{(4t+2,6m-2t+3):t \in \mathbb{Z}_{\geq0} \text{ and } 6t \leq 6m \} \nonumber \\
& \cup \{(4t+3,6m-2t+2):t \in \mathbb{Z}_{\geq0} \text{ and } 6t \leq 6m-2 \}.
\end{align}
\end{lemma}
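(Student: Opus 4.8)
The plan is to derive Lemma \ref{lemma1} directly from Definition \ref{defofgrundy} by substitution and a finite case analysis, since the lemma only re-expresses the three sets $G_{n,1}, G_{n,2}, G_{n,3}$ in explicit affine form. No game-theoretic input is needed at this stage: everything reduces to unwinding the floor functions $\lfloor a/2\rfloor$ and $\lfloor(a+1)/2\rfloor$ together with the parity selector, organized by the residue of $n$ modulo $6$ and the residue of the parameter $a$ modulo $4$.

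I would first dispatch $G_{n,1}$, which is immediate. Since $n \equiv 2 \pmod 3$ holds exactly when $n \in \{6m+2, 6m+5\}$, the four classes $n \in \{6m, 6m+1, 6m+3, 6m+4\}$ give $G_{n,1} = \emptyset$ by definition, covering parts (iii)--(vi). For $n = 6m+2$ substituting into $\frac{2(n-2)}{3}$ and $\frac{4(n-2)}{3}+2$ produces $4m$ and $8m+2$, giving \eqref{lemma1i}; for $n = 6m+5$ the same substitution gives $4m+2$ and $8m+6$, giving \eqref{lemma1ii}.

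The heart of the argument is $G_{n,2}$ and $G_{n,3}$. Here I would write $a = 4t + j$ with $j \in \{0,1,2,3\}$ and record the three derived quantities: $\lfloor a/2\rfloor = 2t + \lfloor j/2\rfloor$, which is even iff $j \in \{0,1\}$; $\lfloor(a+1)/2\rfloor = 2t + \lfloor(j+1)/2\rfloor$, which supplies the second coordinate $n \pm \lfloor(a+1)/2\rfloor$; and the membership constraint, which becomes $6t + \big(j + \lfloor(j+1)/2\rfloor + 1\big) \le n$, i.e. $6t+1, 6t+3, 6t+4, 6t+6 \le n$ for $j = 0,1,2,3$ respectively. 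Because $6m+r$ is even exactly when $r$ is even, the selector $\lfloor a/2\rfloor \equiv n \pmod 2$ in $G_{n,2}$ forces $j \in \{0,1\}$ when $r$ is even and $j \in \{2,3\}$ when $r$ is odd, while $G_{n,3}$ (selector $\equiv n+1$) takes the complementary pair. This single observation already explains the pattern of leading residues ($4t$ versus $4t+2$) seen throughout parts (i)--(vi). For each admissible $j$ the second-coordinate formula is then a one-line substitution of $n = 6m+r$, and the constraint collapses to a bound of the shape $6t \le 6m + c$.

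I expect the genuine work, and the main source of error, to be clerical rather than conceptual: pinning down each index bound and confirming it against the displayed formula. Two points demand attention. First, the relevant data is not the inequality itself but the set of integers $t$ it admits, so I would compare bounds at the level of integer solutions; for instance $6t \le 6m+4$ and $6t \le 6m+1$ both admit exactly $t \in \{0, \dots, m\}$, and such equivalences must be checked rather than assumed. Second, the selector depends on $\lfloor a/2\rfloor$ while the second coordinate depends on $\lfloor(a+1)/2\rfloor$, and these two floors agree when $j$ is even but differ by one when $j$ is odd, so the odd-$j$ rows must be tracked separately. Carrying out this four-way split on $j$ within each of the six residue classes, and matching the resulting affine pairs and admitted ranges of $t$ against the displayed sets, completes the verification.
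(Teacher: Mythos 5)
Your overall strategy is the right one; in fact the paper supplies no proof of Lemma \ref{lemma1} at all, so a direct unwinding of Definition \ref{defofgrundy} organized by the residue of $n$ modulo $6$ and of $a$ modulo $4$ is exactly what is called for. Your observations that the parity selector forces $a\in\{4t,4t+1\}$ versus $a\in\{4t+2,4t+3\}$ according to the parity of $n$, and that the index bounds must be compared at the level of their integer solution sets, are both correct and identify precisely where the care is needed.

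However, your closing claim that carrying out the four-way split ``completes the verification'' does not hold for the statement as printed: one of the subcases fails. In part $(i)$, with $n=6m+2$ and $a=4t+1$, the membership constraint $a+\lfloor\frac{a+1}{2}\rfloor+1\le n$ reads $6t+3\le 6m+2$, i.e.\ $6t\le 6m-1$, which admits exactly $t\in\{0,\dots,m-1\}$; the displayed bound $6t\le 6m+2$ admits $t\in\{0,\dots,m\}$, so the lemma's set contains the extra element $(4m+1,8m+3)$. For $m=0$ this is $(1,3)$, which the lemma places in $G_{2,2}$ but which violates the condition of Definition \ref{defofgrundy} since $1+1+1=3>2$. (By contrast, the companion bound $6t\le 6m+4$ in the $a=4t$ branch is harmless, since it and the derived bound $6t\le 6m+1$ both admit exactly $t\le m$; I checked the remaining parts and they match the definition exactly.) So either the lemma or the definition carries an off-by-one in this one branch, and an honest execution of your plan must flag this subcase rather than assert that everything matches.
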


\begin{lemma}\label{lemmaforg3set}
$(i)$ If $n=6m$, then $(4m-1,4m) \in G_{n,3}$.\\
$(ii)$ If $n=6m+1$, then $(4m,4m+1) \in G_{n,3}$.\\
$(iii)$ If $n=6m+3$, then $(4m+1,4m+2) \in G_{n,3}$.\\
$(iv)$ If $n=6m+4$, then $(4m+2,4m+3) \in G_{n,3}$.
\end{lemma}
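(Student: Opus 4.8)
The plan is to treat this as a direct verification against Definition \ref{defofgrundy}$(iii)$. Recall that $(a,b)\in G_{n,3}$ holds precisely when three conditions are met: the second coordinate satisfies $b=n-\lfloor\frac{a+1}{2}\rfloor$, the size bound $a+\lfloor\frac{a+1}{2}\rfloor+1\le n$ holds, and the parity condition $\lfloor\frac{a}{2}\rfloor\equiv n+1\pmod 2$ is satisfied. For each of the four cases I would fix $a$ to be the claimed first coordinate (namely $a=4m-1,\,4m,\,4m+1,\,4m+2$ respectively), compute $\lfloor\frac{a+1}{2}\rfloor$ explicitly, and check these three conditions in turn.

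The key simplifying observation I would record first is that every pair asserted in the lemma has the form $(k,k+1)$ with consecutive coordinates. Since $b=n-\lfloor\frac{a+1}{2}\rfloor$ is forced by the definition, demanding $b=a+1$ is equivalent to the single equation $a+\lfloor\frac{a+1}{2}\rfloor+1=n$; in other words, the consecutiveness of the pair is exactly the statement that the size bound in Definition \ref{defofgrundy}$(iii)$ is attained with equality (equivalently, the gap $b-a$ attains its minimum value $1$). Thus for each case it suffices to verify that the proposed $a$ solves $a+\lfloor\frac{a+1}{2}\rfloor+1=n$, which automatically yields both $b=a+1$ as the correct second coordinate and the tight inequality, and then to confirm the parity condition separately.

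Carrying this out amounts to a short calculation of floors according to the residue of $a$ modulo $4$. For instance, in case $(i)$ with $n=6m$ one takes $a=4m-1$, so $\lfloor\frac{a+1}{2}\rfloor=2m$ and $a+\lfloor\frac{a+1}{2}\rfloor+1=6m=n$, giving $b=6m-2m=4m=a+1$; the parity check then reads $\lfloor\frac{a}{2}\rfloor=2m-1$ against $n+1=6m+1$, both of which are odd. The remaining three cases run identically: each chosen $a$ makes the bound an equality, and in each the value $\lfloor\frac{a}{2}\rfloor\bmod 2$ matches $(n+1)\bmod 2$. (It is worth noting in passing why the residues $n\equiv 2,5\pmod 6$ are absent: there the quantity $a+\lfloor\frac{a+1}{2}\rfloor$ skips the value $n-1$ as $a$ increases, so no consecutive pair can arise.)

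The computations are routine and I do not anticipate a genuine obstacle; the only points requiring care are the dependence of $\lfloor\frac{a+1}{2}\rfloor$ and $\lfloor\frac{a}{2}\rfloor$ on whether $a$ is even or odd, and the boundary case $m=0$ in part $(i)$, where $a=4m-1=-1$ fails to lie in $\mathbb{Z}_{\ge0}$. Since $n=6m\in\mathbb{N}$ forces $m\ge 1$, this degenerate case does not arise, so part $(i)$ is implicitly restricted to $m\ge 1$. I expect the role of this lemma in the sequel to be the identification of the extremal (minimal-gap, consecutive-coordinate) member of $G_{n,3}$, which presumably anchors a mex computation in the Grundy-number analysis.
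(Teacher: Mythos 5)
Your proposal is correct and follows essentially the same route as the paper's proof: a direct verification against Definition \ref{defofgrundy}$(iii)$, fixing $a$ in each case, computing $a+\lfloor\frac{a+1}{2}\rfloor+1=n$, and checking the parity of $\lfloor\frac{a}{2}\rfloor$ against $n+1$. Your added observations (that consecutiveness of the pair is equivalent to equality in the size bound, and that $m\ge 1$ is needed in part $(i)$ so that $a=4m-1\ge 0$) are correct refinements but do not change the argument; if anything, you are more careful than the paper, which spells out the parity check only in case $(i)$.
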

\begin{proof}
$(i)$ Let $a =4m-1$. Then $a + \lfloor \frac{a+1}{2} \rfloor + 1 = 4m-1 + 2m+1 = 6m \leq n$, 
$\lfloor \frac{a}{2} \rfloor=\lfloor \frac{4m-1}{2} \rfloor = 2m-1$ and $n$ is odd.
Hence, $\lfloor \frac{a}{2} \rfloor = n+1  \ (\bmod\ 2)$.
\begin{equation}
(a,n-\lfloor \frac{a+1}{2} \rfloor)=(a,6m-2m)=(4m-1,4m) \in G_{n,3}.
\end{equation}
$(ii)$ Let $a =4m$. Then $a +\lfloor \frac{a+1}{2} \rfloor + 1 = 4m + 2m+1 = 6m+1 \leq n$, and 
\begin{equation}
(a,n-\lfloor \frac{a+1}{2} \rfloor)=(4m,6m-2m)=(4m,4m+1) \in G_{n,3}.
\end{equation}
$(iii)$ Let $a =4m+1$. Then $a + \lfloor \frac{a+1}{2} \rfloor + 1 = 4m+1 + 2m+1+1 = 6m+3 \leq n$, and 
\begin{equation}
(a,n-\lfloor \frac{a+1}{2} \rfloor)=(a,6m+3-(2m+1))=(4m+1,4m+2) \in G_{n,3}.
\end{equation}
$(iv)$ Let $a =4m+2$. Then $a + \lfloor \frac{a+1}{2} \rfloor + 1 = 4m+2 + 2m+1+1 = 6m+4 \leq n$, and 
\begin{equation}
(a,n-\lfloor \frac{a+1}{2} \rfloor)=(4m+2,6m+4-2m-1)=(4m+2,4m+3) \in G_{n,3}.
\end{equation}
\end{proof}

\begin{lemma}\label{fromto1}
 Suppose that  you start with a position in $G_{n,1}$ with $n=2 \pmod{3}$.\\
$(i)$ You can move to a position in $G_{n^{\prime},3}$ with $n^{\prime} < n$ and $n^{\prime} \ne 2 \pmod{3}$.\\
$(ii)$ You can move to a position in $G_{n^{\prime},1}$ with $n^{\prime} < n$ and $n^{\prime} = 2 \pmod{3}$.\\
\end{lemma}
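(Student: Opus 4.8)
\section*{Proof proposal for Lemma \ref{fromto1}}

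The plan is to exploit the explicit parametrization of $G_{n,1}$. Since a nonempty $G_{n,1}$ forces $n \equiv 2 \pmod 3$, I write $n = 3k+2$ with $k = (n-2)/3 \in \mathbb{Z}_{\ge 0}$, so by Definition \ref{defofgrundy}$(i)$ the starting position has the form $P = (a+2k,\,a+4k+2)$ for some $a \in \mathbb{Z}_{\ge 0}$; set $p = a+2k$ and $q = a+4k+2$, so the gap is $q-p = 2k+2$. This gap is the quantity I track: sliding the right coin leftward (Figure \ref{twocoin2}) shrinks it while fixing the left coin, whereas sliding the left coin leftward (Figure \ref{twocoin1}) only enlarges it and keeps the large right coordinate $q$. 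Because every element of $G_{n',3}$ has right coordinate $n' - \lfloor (a''+1)/2\rfloor \le n' < n \le q$, a left-coin slide can never land in $G_{n',3}$; hence both parts must be obtained by moving the right coin, either as a plain slide or as a push (Figure \ref{twocoin3}).

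For part $(ii)$ I slide only the right coin. For each integer $k'$ with $0 \le k' < k$ I move it to $q' = a + 2k + 2k' + 2$, which is legal since $p < q' < q$. The resulting position $(a+2k,\,a+2k+2k'+2)$ matches the template $(a'+2k',\,a'+4k'+2)$ of $G_{n',1}$ with $n' = 3k'+2$ and $a' = a + 2(k-k') \ge 0$; thus it lies in $G_{n',1}$ with $n' \equiv 2 \pmod 3$ and $n' < n$, establishing $(ii)$ for every admissible $n'$ (the claim being vacuous when $k=0$).

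For part $(i)$ the essential observation is that a push (Figure \ref{twocoin3}) drives the right coin into the left coin and carries both leftward, so the positions it can produce are exactly the adjacent pairs $(j,j+1)$ with $0 \le j \le p-1$. A direct check against Definition \ref{defofgrundy}$(iii)$ --- verifying the range inequality $a'' + \lfloor (a''+1)/2\rfloor + 1 \le n'$, which holds with equality, together with the parity condition $\lfloor a''/2\rfloor \equiv n'+1 \pmod 2$ --- shows that $(2i,2i+1) \in G_{3i+1,3}$ and $(2i+1,2i+2) \in G_{3i+3,3}$ for every $i \ge 0$. These two families realise precisely the residues $n' \equiv 1$ and $n' \equiv 0 \pmod 3$, that is, all $n' \not\equiv 2 \pmod 3$. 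It then remains to confirm that for each target $n' < n$ the required $j$ satisfies $j \le p-1$, so that the push is legal.

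The main obstacle, and really the only delicate point, is the boundary value $n' = 3k+1$, the largest eligible residue-$1$ target. There $j = 2k$, and the push is legal only when $a \ge 1$; when $a = 0$ the left coin already sits at $2k = p$ and cannot be pushed, so instead I reach $(2k,2k+1) \in G_{3k+1,3}$ by a plain right-coin slide from $q = 4k+2$ to $2k+1$, which is legal because $p = 2k < 2k+1 < q$. For every other target ($n' \equiv 1$ with $n' < 3k+1$, or $n' \equiv 0$ with $3 \le n' \le 3k$) the bound $j \le 2k-1 \le p-1$ holds automatically, while the excluded value $n' = 0$ corresponds to $G_{0,3} = \emptyset$ and so need not be reached. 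Assembling these cases yields $(i)$.
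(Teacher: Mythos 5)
Your proposal is correct and follows essentially the same route as the paper: part $(ii)$ is the identical right-coin slide within the $G_{\cdot,1}$ family, and part $(i)$ uses the push/slide to reach the adjacent pairs $(j,j+1)$, whose membership in $G_{n',3}$ is exactly the content of the paper's Lemma \ref{lemmaforg3set} (you re-derive it with a mod-$3$ rather than mod-$6$ parametrization). The only difference is that you are more careful than the paper about the legality of the push at the boundary target $n'=3k+1$ when $a=0$, which is a welcome refinement rather than a new approach.
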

\begin{proof}
Suppose that we start with  $(x,y) \in G_{n,1}$ with $n=2 \pmod{3}$.\\
$(i.1)$ If $n=6m+2$, by (\ref{lemma1i}) of Lemma \ref{lemma1}
$(x,y) = (a+4m,a+8m+2)$.
For $n^{\prime}=6m^{\prime}$, $n^{\prime}=6m^{\prime}+1$ with $m^{\prime} \leq m$, we can reduce  $a+8m+2$ to $4m^{\prime}$, $4m^{\prime}+1$, and by $(i)$ and $(ii)$ of Lemma \ref{lemmaforg3set}, 
move to $(4m^{\prime}-1,4m^{\prime}) \in G_{n^{\prime},3}$ and $(4m^{\prime},4m^{\prime}+1) \in G_{n^{\prime},3}$

For $n^{\prime}=6m^{\prime}+3$, $n^{\prime}=6m^{\prime}+4$ with $m^{\prime} < m$, we can reduce  $a+8m+2$ to $4m^{\prime}+2$, $4m^{\prime}+3$, and by by $(iii)$ and $(iv)$ of  Lemma \ref{lemmaforg3set}, we 
move to $(4m^{\prime}+1,4m^{\prime}+2) \in G_{n^{\prime},3}$ and $(4m^{\prime}+2,4m^{\prime}+3) \in G_{n^{\prime},3}$.\\
$(i.2)$ If $n=6m+5$, then by (\ref{lemma1ii}) of Lemma \ref{lemma1}, $(x,y)=(a+4m+2,a+8m+6)$.
For $n^{\prime}=6m^{\prime}+4$, $n^{\prime}=6m^{\prime}+3$, $n^{\prime}=6m^{\prime}+2$, $n^{\prime}=6m^{\prime}+1$ with $m^{\prime} \leq m$, we  by reducing  $a+8m+6$ to $4m^{\prime}+3$, $4m^{\prime}+2$,$4m^{\prime}+1$,$4m^{\prime}$, we move to
$(4m^{\prime}+2,4m^{\prime}+3)$, $(4m^{\prime}+1,4m^{\prime}+2)$,$(4m^{\prime},4m^{\prime}+1)$,$(4m^{\prime}-1,4m^{\prime})\in G_{n^{\prime},3}$\\
$(ii)$ Suppose that $n=3m+2$ and start with $(a+2m,a+4m+2)$. For $n^{\prime} < n$, there exists $m^{\prime}<m$ such that $n^{\prime}=3m^{\prime}+2$.
Let $a^{\prime}=2(m-m^{\prime})+a$. Then, 
$(a^{\prime}+2m^{\prime},a^{\prime}+4m^{\prime}+2) = (a+2m,a+2m+2m^{\prime}+2) \in G_{n^{\prime},1}$.
Since $a+2m+2m^{\prime}+2 < a+4m+2$, by reducing $a+4m+2$ to $a+2m+2m^{\prime}+2$, we can move to
$(a+2m,a+2m+2m^{\prime}+2) \in G_{n^{\prime},1}$.
\end{proof}

\begin{lemma}\label{fromto2}
If you start with a position in $G_{n,2}$, you
can move to a position in $G_{n^{\prime},2} \cup G_{n^{\prime},3}$.
\end{lemma}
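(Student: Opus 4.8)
The plan is to fix an arbitrary $(a,y)\in G_{n,2}$ and, for each target value $n'<n$, to exhibit one legal move landing in $G_{n',2}\cup G_{n',3}$. By Definition \ref{defofgrundy} we may write $y=n+k$ with $k=\lfloor\frac{a+1}{2}\rfloor$, and by Lemma \ref{lemma1} the pair $(a,y)$ belongs to one of the two explicit sub-families attached to the residue of $n$ modulo $6$. From $(a,y)$ the usable moves are: sliding the right coin leftward to any $y'$ with $a<y'<y$ (Figures \ref{twocoin1} and \ref{twocoin2}), and the push of Figure \ref{twocoin3}, which lands on an adjacent position $(c,c+1)$ for any $0\le c\le a$. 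Sliding the left coin alone only increases the gap and will not be used.

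The main construction keeps the left coin at $a$ and slides the right coin. A position with first coordinate $a$ lies in $G_{n',2}$ precisely when its second coordinate equals $n'+k$, and in $G_{n',3}$ precisely when it equals $n'-k$; the parity clauses of Definition \ref{defofgrundy} make the first available when $n'\equiv n\pmod 2$ and the second when $n'\equiv n+1\pmod 2$. Hence for every target with $a+k<n'<n$ I would slide the right coin to $n'+k$ (same parity as $n$) or to $n'-k$ (opposite parity); both values land strictly between $a$ and $y$, and here the Definition constraint $a+k+1\le n'$ holds automatically. The substantive work is the case analysis on $n\bmod 6$ confirming that the landing pair meets the membership conditions of Lemma \ref{lemma1}, the point being that the first coordinate $a$ is frozen by the slide, so one must check that its residue modulo $4$ aligns with the correct sub-family of $G_{n',2}$ or $G_{n',3}$ for that $n'$. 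For the smaller targets $n'\le a+k$ with $n'\not\equiv 2\pmod 3$ I would instead push to the adjacent position $(c,c+1)$ with $c+\lfloor\frac{c+1}{2}\rfloor+1=n'$; as $c$ runs through $0,\dots,a$ these realize exactly the integers $n'\not\equiv 2\pmod 3$ up to $a+k+1$, and Lemma \ref{lemmaforg3set} certifies that each such adjacent pair lies in $G_{n',3}$.

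The step I expect to be the main obstacle is the small targets $n'\equiv 2\pmod 3$. These are exactly the values for which $G_{n',1}\neq\emptyset$: the push cannot produce them, because an adjacent position always has value $\not\equiv 2\pmod 3$, and a right-coin slide into $G_{n',2}\cup G_{n',3}$ requires a first coordinate that can no longer be attained once $a$ is large, since a single legal move cannot simultaneously lower the left coin and reposition the right coin unless the two coins end up adjacent. For such $n'$ one keeps the left coin at $a$ and slides the right coin into $G_{n',1}$ instead, which is the content of the companion reachability statement; accordingly the precise reading of the lemma routes these values through $G_{n',1}$ and asserts the $G_{n',2}\cup G_{n',3}$ landing only for the remaining $n'$. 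Apart from this routing, what is left is the lengthy but mechanical verification of the Lemma \ref{lemma1} membership inequalities across the six residue classes.
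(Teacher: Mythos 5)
Your plan is essentially the paper's own proof: when $a+\lfloor\frac{a+1}{2}\rfloor+1\le n'$ you freeze the left coin and slide the right coin to $n'+\lfloor\frac{a+1}{2}\rfloor$ or $n'-\lfloor\frac{a+1}{2}\rfloor$ according to the parity of $n'-n$; otherwise you use the adjacent positions of Lemma \ref{lemmaforg3set} for $n'\not\equiv 2\pmod 3$ and land in $G_{n',1}$ for $n'\equiv 2\pmod 3$, which is exactly the paper's Cases 1, 2.1 and 2.2. You also correctly flag the same discrepancy the paper's proof exhibits, namely that the $n'\equiv 2\pmod 3$ branch reaches $G_{n',1}$ rather than $G_{n',2}\cup G_{n',3}$ as the lemma's statement literally claims.
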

\begin{proof}
\noindent {\tt Case 1}:
Suppose that we start with $(a,n+ \lfloor \frac{a+1}{2}  \rfloor)$ such that
$a+ \lfloor \frac{a+1}{2}  \rfloor + 1 \leq n$ and $\lfloor \frac{a}{2}  \rfloor = n  \pmod 2$.

Suppose that $a+ \lfloor \frac{a+1}{2}  \rfloor + 1 \leq n^{\prime}$.
If $n^{\prime}=n  \pmod 2$,
we can move to  $(a,n^{\prime}+ \lfloor \frac{a+1}{2}  \rfloor) \in G_{n^{\prime},2}$ with  $\lfloor \frac{a}{2}  \rfloor = n= n^{\prime} \pmod 2$.

If $n^{\prime}+1=n  \pmod 2$,
we can move to  $(a,n^{\prime}- \lfloor \frac{a+1}{2}  \rfloor) \in G_{n^{\prime},3}$ with  $\lfloor \frac{a}{2}  \rfloor = n= n^{\prime}+1 \pmod 2$.\\
\noindent {\tt Case 2}:
Suppose that 
\begin{equation}
a+ \lfloor \frac{a+1}{2}  \rfloor + 1 > n^{\prime}.\label{asmallerth}
\end{equation}
\noindent {\tt Case 2.1}:
Suppose that $n^{\prime}=0,1 \pmod{3}$. Then, there exists $m$ such that 
$n^{\prime}=6m$ or $n^{\prime}=6m+1$ or $n^{\prime}=6m+3$ or $n^{\prime}=6m+4$. 

Since 
\begin{equation}
4m-1+ \lfloor \frac{4m-1+1}{2}  \rfloor  +1 = 6m =  n^{\prime}
\end{equation}
or 
\begin{equation}
4m+ \lfloor \frac{4m+1}{2}  \rfloor  +1 = 6m+1 =  n^{\prime}
\end{equation}
or
\begin{equation}
4m+1 \lfloor \frac{4m+1+1}{2}  \rfloor  +1 = 6m+3 =  n^{\prime}
\end{equation}
or
\begin{equation}
4m+2+ \lfloor \frac{4m+2+1}{2}  \rfloor  +1 = 6m+4 = n^{\prime},
\end{equation}
% 上の２つの計算式は必要か？
by (\ref{asmallerth}), 
we have 
\begin{equation}
4m-1 < a \label{4mminus1}    
\end{equation}
or
\begin{equation}
4m < a \label{4mminus1b}    
\end{equation}
or
\begin{equation}
4m+1 < a \label{4mminus2}    
\end{equation}
or
\begin{equation}
4m+2 < a.\label{4mplus2b}   
\end{equation}

Since we have 
\begin{equation}
4m < 6m = n^{\prime} \leq n + \lfloor \frac{a+1}{2}  \rfloor 
\end{equation}
or 
\begin{equation}
4m+1 < 6m+1 = n^{\prime} \leq n + \lfloor \frac{a+1}{2}  \rfloor, 
\end{equation}
or 
\begin{equation}
4m+2 < 6m+3 = n^{\prime} \leq n + \lfloor \frac{a+1}{2}  \rfloor, 
\end{equation}
or 
\begin{equation}
4m+3 < 6m+4 = n^{\prime} \leq n + \lfloor \frac{a+1}{2}  \rfloor, 
\end{equation}
by Lemma \ref{lemmaforg3set}, (\ref{4mminus1}),(\ref{4mminus1b}),   and (\ref{4mplus2b})   
, we can move to 
$(4m-1,4m) \in G_{n^{\prime},3}$ or $(4m,4m+1) \in G_{n^{\prime},3}$ or $(4m+1,4m+2) \in G_{n^{\prime},3}$ or $(4m+2,4m+3) \in G_{n^{\prime},3}$.\\
\noindent {\tt Case 2.2}:
Suppose that $n^{\prime}=2 \pmod{3}$. Then, there exists $m$ such that 
$n^{\prime}=6m+2$ or $n^{\prime}=6m+5$. 
We prove that we can move to 
$(b+4m,b+8m+2)=(a,a+4m+2) \in G_{n^{\prime},1}$ or $(b+4m+2,b+8m+6)=(a,a+4m+4) \in G_{n^{\prime},1}$.
Since
\begin{equation}
a+\lfloor \frac{a+1}{2} \rfloor +1 \geq n^{\prime} = 6m+3 \text{ or }  6m+6,
\end{equation}
\begin{equation}
\frac{2}{3} (a+\lfloor \frac{a+1}{2} \rfloor +1) \geq  4m+2 \text{ or }  4m+4.
\end{equation}
Since
\begin{equation}
\lfloor \frac{a+1}{2} \rfloor \times 2 + 1 > \frac{2}{3} (a+\lfloor \frac{a+1}{2} \rfloor +1),
\end{equation}
\begin{equation}
n+\lfloor \frac{a+1}{2} \rfloor \geq a + \lfloor \frac{a+1}{2} \rfloor \times 2 + 1 > a+4m+2 \text{ or } a+4m+4.
\end{equation}
Therefore, 
we can move to 
$(b+4m,b+8m+2)=(a,a+4m+2) \in G_{n^{\prime},1}$ or $(b+4m+2,b+8m+6)=(a,a+4m+4) \in G_{n^{\prime},1}$.
\end{proof}

\begin{lemma}\label{fromto3}
For $n,n^{\prime}$ such that $n > n^{\prime}$,
if we start with $(x,y) \in G_{n,3}$, we can reach a position in $G_{n^{\prime},1}\cup G_{n^{\prime},2}\cup G_{n^{\prime},3}. $
\end{lemma}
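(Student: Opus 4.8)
The plan is to mirror the proof of Lemma~\ref{fromto2}, writing the starting position as $(x,y)=(a,n-g)$ with $g=\lfloor\frac{a+1}{2}\rfloor$, where the defining conditions of $G_{n,3}$ give $a+g+1\le n$ and $\lfloor\frac a2\rfloor\equiv n+1\pmod 2$. Three kinds of legal moves are available: lowering the right coin while fixing the left coin $a$, lowering the left coin while fixing the right coin $y=n-g$, and the push that sends the two coins to an adjacent pair $(c,c+1)$ with $c\le a-1$. As in Lemma~\ref{fromto2} I would split on whether $a+g+1\le n'$ (Case~1) or $a+g+1>n'$ (Case~2); the essential new difficulty is that here the right coin $y=n-g$ sits low, so lowering it alone no longer reaches every target. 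The value $0$ (the terminal position, reached by pushing both coins off the board) is handled separately, the argument below covering $1\le n'<n$.

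In Case~1 the left coin is already small enough for the value $n'$. If $n'\equiv n\pmod 2$ I lower the right coin to $(a,n'-g)\in G_{n',3}$, which is automatically a decrease since $n'<n$. If $n'\not\equiv n\pmod 2$ the natural target is $(a,n'+g)\in G_{n',2}$, but this is a decrease only when $n'<n-2g$; for the finitely many opposite-parity values $n-2g<n'<n$ I instead lower the \emph{left} coin, keeping $y=n-g$ fixed, landing in $G_{n',3}$ (value $n-g+g'$, where $g'=n'-n+g$) when $n'>n-g$ and in $G_{n',2}$ (value $n-g-g'$) when $n'\le n-g$. For each required $g'$ there are two candidate positions $a'\in\{2g'-1,2g'\}$ with opposite parities of $\lfloor a'/2\rfloor$, so one of them meets the parity condition of the target set; the bound $g'\le g-1$ together with $a\ge 2g-1$ guarantees $a'<a$, and the constraint $n'\ge a+g+1$ makes the validity inequality hold.

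In Case~2 the left coin is too large for $n'$, so I reduce it. When $n'\not\equiv 2\pmod 3$ I push to the adjacent pair $(c,c+1)\in G_{n',3}$ supplied by Lemma~\ref{lemmaforg3set}; since $t\mapsto t+\lfloor\frac{t+1}2\rfloor+1$ is strictly increasing and equals $n'$ at $t=c$ but exceeds $n'$ at $t=a$, we get $c<a$, so the push is legal and strictly lowers the right coin (to $c+1\le a<n-g$). When $n'\equiv 2\pmod 3$ I aim for $G_{n',1}$: the inequality $a+g+1>n'$ forces $a\ge\frac{2(n'-2)}3$, so $(a,a+\frac{2n'+2}3)\in G_{n',1}$, and I use this right-coin move whenever $a+\frac{2n'+2}3<n-g$; otherwise I lower the left coin to $(n-g-\frac{2n'+2}3,\,n-g)\in G_{n',1}$ (or to the parity-matched $G_{n',3}$/$G_{n',2}$ position in the same column).

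The main obstacle is exactly this low position of the right coin: in Lemma~\ref{fromto2} the right coin $n+g$ is high enough that lowering it reaches every needed target, whereas here one is forced to lower the left coin for target values near $n$ of the wrong parity and for $n'\equiv 2\pmod 3$ whose $G_{n',1}$ gap exceeds the available room. The delicate bookkeeping is to check, in each of these sub-cases, that the parity-matched relocation of the left coin is simultaneously a strict decrease, stays on the board, and satisfies the validity inequality of the claimed $G_{n',k}$, and then to confirm that the value-bands covered by the left-coin moves, the push moves, and the right-coin moves together exhaust $\{1,\dots,n-1\}$. I expect the parity matching (choosing between $a'=2g'-1$ and $a'=2g'$) and the boundary cases $n'=n-g$ and $n'$ near $\tfrac34(n-g)$, where the three regimes meet, to require the most care.
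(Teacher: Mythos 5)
Your skeleton is the same as the paper's: write the position as $(a,n-g)$ with $g=\lfloor\frac{a+1}{2}\rfloor$, split on whether $a+g+1\le n'$, and in the second case split on $n'\bmod 3$, using Lemma \ref{lemmaforg3set} to reach the adjacent pair $(c,c+1)\in G_{n',3}$ when $n'\not\equiv 2\pmod 3$; that part of your argument coincides with the paper's and is sound. In Case 1 with $n'\equiv n\pmod 2$ your move (lower the right coin to $(a,n'-g)$) is in fact cleaner than the paper's, which shifts the left coin to $a'=a-2(n-n')$ --- a quantity that can be negative (from $(0,11)\in G_{11,3}$ with target $n'=9$ the paper's recipe gives $a'=-4$, whereas your $(0,9)\in G_{9,3}$ works). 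You have also correctly diagnosed the structural difference from Lemma \ref{fromto2}: the right coin sits at $n-g$ rather than $n+g$, so right-coin moves alone do not reach every target.

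The genuine gap is that the two places you flag as ``requiring the most care'' are exactly where the lemma has to be earned, and you leave them as expectations rather than arguments. In Case 1 with $n'\not\equiv n\pmod 2$ and $n'\ge n-2g$ you must actually verify that the parity-matched $a'\in\{2g'-1,2g'\}$ satisfies $a'+g'+1\le n'$; this does hold, but only because the Case 1 constraint $n'\ge a+g+1\ge 3g$ (using $a\ge 2g-1$) squeezes the band where both the right-coin move and the left-coin $G_{n',2}$ move would fail down to nothing, and the boundary $n'=n-g$ forces $g'=0$, $a'=0$, where you no longer have two candidates and the target set is dictated by the parity of $n'$. In Case 2.2 your fallback $(n-g-\frac{2n'+2}{3},\,n-g)\in G_{n',1}$ requires $n-g\ge\frac{4n'-2}{3}$, which is not implied by the failure of the right-coin move, so the further fallback to a $G_{n',2}$/$G_{n',3}$ position in the same column carries the whole weight there; its decrease, nonnegativity and validity conditions are unchecked, as is the degenerate possibility $a=n-g-\frac{2n'+2}{3}$, where your target coincides with the starting position. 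To be fair, the paper's own proof stops dead at Case 2.2 (nothing follows ``Suppose that $n'=2\pmod 3$''), so your sketch goes further than the printed argument; but as submitted it is a plan whose hardest verifications remain to be done.
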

\begin{proof}

Suppose that we start with $(a,n- \lfloor \frac{a+1}{2}  \rfloor)$ such that
$a+ \lfloor \frac{a+1}{2}  \rfloor + 1 \leq n$ and $\lfloor \frac{a}{2}  \rfloor = n+1  \pmod 2$.\\
\noindent {\tt Case 1}:
Suppose that 
\begin{equation}
a+ \lfloor \frac{a+1}{2}  \rfloor + 1 \leq n^{\prime}  \label{afllesstahn}  
\end{equation}
Let $a^{\prime}=a-2(n-n^{\prime})$.
Then $n-\lfloor \frac{a+1}{2}  \rfloor = n^{\prime}-\lfloor \frac{a^{\prime}+1}{2}  \rfloor $, and hence
we can move to $(a^{\prime},n-\lfloor \frac{a+1}{2}  \rfloor)$.
By (\ref{afllesstahn}), 
\begin{equation}
a^{\prime}+ \lfloor \frac{a^{\prime}+1}{2}  \rfloor + 1 \leq n^{\prime}. \label{afllesstahn2}  
\end{equation}
$\lfloor \frac{a^{\prime}}{2}  \rfloor = n^{\prime}-n+\lfloor \frac{a}{2}  \rfloor = n^{\prime} +1 \pmod 2$.
Hence $(a^{\prime},n-\lfloor \frac{a+1}{2}  \rfloor) \in G_{n^{\prime},3}$.

If $n^{\prime}=n  \pmod 2$,
we can move to  $(a,n^{\prime}+ \lfloor \frac{a+1}{2}  \rfloor) \in G_{n^{\prime},2}$ with  $\lfloor \frac{a}{2}  \rfloor = n= n^{\prime} \pmod 2$.
\noindent {\tt Case 2}:
Suppose that 
\begin{equation}
a+ \lfloor \frac{a+1}{2}  \rfloor + 1 > n^{\prime}.\label{asmallerth0}
\end{equation}
\noindent {\tt Case 2.1}:
Suppose that $n^{\prime}=0,1 \pmod{3}$. Then, there exists $m$ such that 
$n^{\prime}=6m$ or $n^{\prime}=6m+1$ or $n^{\prime}=6m+3$ or $n^{\prime}=6m+4$. 
Since
\begin{equation}
a+ \lfloor \frac{a+1}{2}  \rfloor + 1 \geq n^{\prime}+1,\label{asmallerth2}
\end{equation}
\begin{equation}
a+  \frac{a+1}{2}  + 1 \geq n^{\prime}+1.\label{asmallerth3}
\end{equation}
Hence
\begin{equation}
\frac{3}{2}a  + \frac{3}{2} \geq n^{\prime}+1.\label{asmallerth3b}
\end{equation}
For $n^{\prime}=6m,6m+1,6m+3,6m+4$,
$a+1 \geq 4m+ \frac{2}{3}$,
$a+1 \geq 4m+ \frac{4}{3}$,
$a+1 \geq 4m+ \frac{8}{3}$,
$a+1 \geq 4m+ \frac{10}{3}$, and hence
$a+1 \geq 4m+ 1$,
$a+1 \geq 4m+ 2$,
$a+1 \geq 4m+ 3$,
$a+1 \geq 4m+ 4$.
Therefore, we can move to 
$(4m-1,4m),(4m,4m+1),(4m+1,4m+2),(4m+2,4m+3)$ respectively.\\
\noindent {\tt Case 2.2}:
Suppose that $n^{\prime}=2 \pmod{3}$.    
\end{proof}

\begin{lemma}\label{fromto4}
For $n$,
$(i)$ If we start with $(x,y) \in G_{n^{\prime},1} $, we cannot reach a position in $G_{n^{\prime},2}$.

if we start with $(x,y) \in G_{n^{\prime},1} \cup G_{n^{\prime},2}\cup G_{n^{\prime},3}$, we cannot reach a position in $G_{n^{\prime},1} \cup G_{n^{\prime},2}\cup G_{n^{\prime},3}$.
\end{lemma}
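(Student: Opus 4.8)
The plan is to fix the common level $n$, abbreviate $L_n:=G_{n,1}\cup G_{n,2}\cup G_{n,3}$, and show that no legal move sends a position of $L_n$ to another position of $L_n$; combined with Lemmas \ref{fromto1}, \ref{fromto2} and \ref{fromto3} (which reach every \emph{lower} level) this identifies the Grundy value as $n$ on all of $L_n$. Write a position as $(x,y)$ with $0\le x<y$, where $x$ and $y$ are the left and right coins. The legal moves are: (M1) decrease $x$, keeping $y$; (M2) decrease $y$ to some $y'$ with $x<y'<y$, keeping $x$; (M3) the push move, whose outcome is an adjacent pair $(c-1,c)$ with $1\le c\le x$; and the edge pushes, which remove a coin. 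Every element of $L_n$ is a genuine two-coin position, so an edge push never lands in $L_n$ and only M1, M2, M3 require work.

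The organizing device is a \emph{candidate-level} bookkeeping. For any pair $(x,y)$ put $n_1=\tfrac{3(y-x)}2-1$, $n_2=y-\lfloor\tfrac{x+1}2\rfloor$ and $n_3=y+\lfloor\tfrac{x+1}2\rfloor$; by Definition \ref{defofgrundy} the pair can belong to $G_{n_i,i}$ only when the associated side conditions hold, namely for $i=1$ that $y-x$ is even, $n_1\equiv2\pmod3$ and $x\ge\tfrac{2(n_1-2)}3$; for $i=2$ that $x+\lfloor\tfrac{x+1}2\rfloor+1\le n_2$ and $\lfloor\tfrac x2\rfloor\equiv n_2\pmod2$; for $i=3$ that $\lfloor\tfrac x2\rfloor\equiv n_3+1\pmod2$. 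Two elementary facts drive everything. First, a \emph{parity exclusion}: since $n_3-n_2=2\lfloor\tfrac{x+1}2\rfloor$ is even, the type-$2$ and type-$3$ parity conditions demand opposite residues of $\lfloor x/2\rfloor$, so a fixed pair lies in $L_n$ through at most one of types $2,3$. Second, a \emph{mod-$3$ separation}: $n_1\equiv2\pmod3$ always, whereas for an adjacent pair $(c-1,c)$ one has $n_3=c+\lfloor c/2\rfloor\equiv0$ or $1\pmod3$; hence an M3 target is never of type $1$.

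I would then argue move by move. Under M2 all of $n_1,n_2,n_3$ strictly decrease, so a same-level target forces a type change: a type-$2$ source cannot reach type $3$ at level $n$ by the parity exclusion, and cannot reach type $1$ at level $n$ because matching the gap forces $y'=x+\tfrac{2(n+1)}3$, whose legality $y'<y$ is equivalent to $\lfloor x/2\rfloor<\tfrac{n-2}3$, in direct conflict with the type-$1$ bound $x\ge\tfrac{2(n-2)}3$, i.e. $\lfloor x/2\rfloor\ge\tfrac{n-2}3$. Under M1 one has $x$ strictly decreasing and $y$ fixed, so $n_2$ is nondecreasing and $n_3$ nonincreasing; the only threatening coincidence leaves $\lfloor\tfrac{x+1}2\rfloor$ unchanged, but then $\lfloor x/2\rfloor$ flips parity and destroys the type-$2$ and type-$3$ memberships, while a type-$1$ target at level $n$ would need $x'=y-\tfrac{2(n+1)}3\ge\tfrac{2(n-2)}3$, which for a type-$3$ source reduces to $y+4\lfloor\tfrac{x+1}2\rfloor\le2$ (impossible once a move exists) and for a type-$2$ source is blocked by the type-$2$ upper bound on $x$. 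Under M3 the target $(c-1,c)$ is never type $1$ (mod-$3$ separation), is type $2$ only in the degenerate case $(0,1)$ at level $1$ (the type-$2$ gap condition forces $c\le1$), and has type-$3$ level $c+\lfloor c/2\rfloor\le x+\lfloor x/2\rfloor<n$ whenever the source is of type $2$ or $3$, while a type-$1$ source is again excluded modulo $3$. The special assertion \textbf{(i)} (type $1\to$ type $2$) is the M2 analysis specialized to a type-$1$ source and is therefore covered.

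The hard part is exactly these \emph{cross-type near coincidences}. The bare level equalities $n'_i(\text{target})=n$ frequently do admit integer solutions, and what rules them out is never a single inequality but the simultaneous pressure of four constraints read through the floor functions: the parity of $\lfloor x/2\rfloor$, the residue of $n$ modulo $3$, the type-$2$ ``right coin is far'' inequality $x+\lfloor\tfrac{x+1}2\rfloor+1\le n$, and the type-$1$ lower bound $x\ge\tfrac{2(n-2)}3$. Keeping these synchronized—and splitting on the parity of $x$ wherever the floors behave differently—is the delicate bookkeeping; the explicit residue-class formulas of Lemma \ref{lemma1} and the membership facts of Lemma \ref{lemmaforg3set} are what turn each of the finitely many sub-cases into a routine verification. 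I expect no conceptual obstruction beyond this, only the discipline of carrying all four constraints through every case.
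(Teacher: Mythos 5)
Your proposal takes a genuinely different route from the paper, and a far more complete one: the paper's own proof of this lemma is only a fragment of part $(i)$ --- it treats the single sub-case of a type-1 source at $n=3m+2$ whose left coin is held fixed, the displayed inequality there is garbled, and the main unnumbered assertion (no move within $G_{n,1}\cup G_{n,2}\cup G_{n,3}$) together with the moves you call M1 and M3 is never addressed at all. Your organizing device --- attaching to each pair $(x,y)$ the candidate levels $n_1=\frac{3(y-x)}{2}-1$, $n_2=y-\lfloor\frac{x+1}{2}\rfloor$, $n_3=y+\lfloor\frac{x+1}{2}\rfloor$ and playing the parity of $\lfloor x/2\rfloor$, the residue of $n$ modulo $3$, the type-2 inequality $x+\lfloor\frac{x+1}{2}\rfloor+1\le n$, and the type-1 bound $x\ge\frac{2(n-2)}{3}$ against one another move by move --- is sound, and the cases you defer as routine do in fact close: for instance, a type-1 source reaching a type-2 target under M2 forces $x=\frac{2(n-2)}{3}$ exactly, whereupon the target's right coin coincides with the source's and no legal move exists; under M1 it forces $x'=\frac{2(n-2)}{3}-1$, which fails the type-2 parity condition. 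What your approach buys is an actual proof of the full statement; what the paper offers is at best a worked example of one sub-case.

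Two points need repair before the plan is a proof. First, your stated reason that an M3 target $(c-1,c)$ is never of type 1 does not follow from computing $n_3(c-1,c)\bmod 3$; the correct (and simpler) reason is that every type-1 position has even gap $y-x=\frac{2(n-2)}{3}+2\ge 2$, while an adjacent pair has gap $1$. The mod-3 computation $c+\lfloor\frac{c}{2}\rfloor\equiv 0,1\pmod 3$ is still needed, but for a different exclusion: a type-1 source, whose level satisfies $n\equiv 2\pmod 3$, cannot push to an adjacent pair lying in $G_{n,3}$ at the same level. Second, part $(i)$ is not literally covered by "the M2 analysis specialized to a type-1 source," since the M2 argument you actually wrote out is for a type-2 source and the relevant inequalities point the other way; the type-1-to-type-2 case needs the short separate computations indicated above (one for each of M1, M2, M3). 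With those cases written out explicitly --- and likewise the type-3-source cases under M1 and M2, each of which dies on the parity of $\lfloor x/2\rfloor$ or on the type-1 lower bound --- your argument establishes the lemma in full.
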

\begin{proof}
$(i)$ Suppose that $n=3m+2$. We start with  $(a+2m,a+4m+2) \in G_{n,1}$ and move to $(b,n+ \lfloor \frac{b+1}{2}  \rfloor+1$ such that $b+ \lfloor \frac{b+1}{2}  \rfloor+1 \leq n$.
If $b=a+2m$, 
\begin{equation}
n+ \lfloor \frac{b+1}{2}  \rfloor+1 = \max(a+2m+ \lfloor \frac{a+2m+1}{2}  \rfloor+1,3m+2) + \lfloor \frac{2m+1}{2}  \rfloor \geq a+4m+2.
\end{equation}
\end{proof}

\begin{theorem}\label{grundygame1}
The union $G_{n,1} \cup G_{n,2}\cup G_{n,3}$ of the sets in Definition \ref{defofgrundy} the sets
is the set of positions whose Grundy number is $n$.  
\end{theorem}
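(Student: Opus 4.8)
The plan is to check the Sprague--Grundy recursion $G(\mathbf{p})=\textit{mex}\{G(\mathbf{q}):\mathbf{q}\in move(\mathbf{p})\}$ directly for the labelling given by the three families, and then to promote that one-sided statement to the claimed equality by a tiling argument. Write $S_n=G_{n,1}\cup G_{n,2}\cup G_{n,3}$, and call $n$ the \emph{label} of a two-coin position $\mathbf{p}$ when $\mathbf{p}\in S_n$. Since every legal move either sends a coin to a strictly smaller square or clears the board, the move relation is well founded, so I would induct along it: assuming that every position $\mathbf{q}$ reachable from $\mathbf{p}$ satisfies that $G(\mathbf{q})$ equals the label of $\mathbf{q}$, I prove the same for $\mathbf{p}$.

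For the inductive step at $\mathbf{p}\in S_n$ I would first show that every $n'<n$ occurs as the Grundy number of some option. For $1\le n'<n$ this is exactly the content of Lemmas \ref{fromto1}, \ref{fromto2} and \ref{fromto3}: depending on which of $G_{n,1},G_{n,2},G_{n,3}$ contains $\mathbf{p}$, one of these lemmas exhibits a move from $\mathbf{p}$ into $S_{n'}$ (a position in $G_{n,1}$ reaches $G_{n',1}$ or $G_{n',3}$ according as $n'\equiv 2\pmod 3$ or not; a position in $G_{n,2}$ reaches $G_{n',2}\cup G_{n',3}$, or $G_{n',1}$ in the residue-$2$ case; a position in $G_{n,3}$ reaches $G_{n',1}\cup G_{n',2}\cup G_{n',3}$), and by the induction hypothesis that option has Grundy number $n'$. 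The remaining value $n'=0$ is supplied separately by the board-clearing move, which pushes both coins off the edge into a terminal position of Grundy number $0$; this move is available from every two-coin position, consistent with $S_0=\emptyset$.

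Next I would show that no option of $\mathbf{p}$ has Grundy number $n$. Lemma \ref{fromto4} states that from a position in $S_n$ one cannot move to another position in $S_n$, so no two-coin option carries the label $n$; by the induction hypothesis together with the uniqueness of labels, no two-coin option has Grundy number $n$, and the only other options are cleared-board terminals of Grundy number $0\ne n$. The two halves give $\textit{mex}\{G(\mathbf{q}):\mathbf{q}\in move(\mathbf{p})\}=n$, hence $G(\mathbf{p})=n$, which establishes $S_n\subseteq\{\mathbf{p}:G(\mathbf{p})=n\}$. For the reverse inclusion I would invoke the partition property below: if $G(\mathbf{p})=n$ then $\mathbf{p}\in S_{n''}$ for some $n''$, whence $G(\mathbf{p})=n''$ by what was just proved, forcing $n''=n$, so $\mathbf{p}\in S_n$.

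The substantive remaining ingredient, and the step I expect to be the real obstacle, is the claim that $\{S_n\}_{n\ge 1}$ \emph{partitions} the set $\{(x,y):0\le x<y\}$ of two-coin positions: the $S_n$ are pairwise disjoint for distinct $n$, and their union is everything. This is a purely arithmetic statement about the explicit progressions of Lemma \ref{lemma1}; one natural approach is to organise the listed pairs according to $x\bmod 4$ and to check that, as $n$ ranges over its six residue classes modulo $6$, the associated second coordinate $y$ runs once through each admissible value, so that every $(x,y)$ is generated for exactly one $n$. Two small facts about the rules underlie the induction and I would record them first: that the only reachable positions with fewer than two coins on the board have Grundy number $0$ (so $n'=0$ is always available and never clashes with a label $n\ge 1$), and that, because the right coin may never cross the left coin, coins can leave the board only through the clearing move. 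Granting the partition, the mex induction is then formal, and the delicacy lies entirely in the disjointness/exhaustiveness bookkeeping and in confirming that Lemmas \ref{fromto1}--\ref{fromto3} genuinely realize \emph{every} $n'<n$ rather than merely a convenient subset.
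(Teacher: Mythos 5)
Your skeleton coincides with the paper's: the proof of Theorem \ref{grundygame1} given there is precisely the mex induction you describe, with Lemmas \ref{fromto1}, \ref{fromto2} and \ref{fromto3} supplying, for each $n'<n$, a move into $S_{n'}=G_{n',1}\cup G_{n',2}\cup G_{n',3}$, and Lemma \ref{fromto4} excluding moves that stay inside $S_n$; the paper simply cites those four lemmas and stops. You are right to isolate the partition property as the load-bearing step that is nowhere established: without knowing that the sets $S_n$ are pairwise disjoint and exhaust all two-coin positions, neither the assertion that no option has Grundy number $n$ nor the reverse inclusion $\{\mathbf{p}:G(\mathbf{p})=n\}\subseteq S_n$ follows. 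On that point your write-up is more candid than the source, but since you leave the partition as an unverified ``obstacle,'' your proposal, like the paper's proof, is not yet complete.

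There is also a concrete error in one of the two ``small facts'' you propose to record. A position with a single coin on square $k$ is not terminal and need not have Grundy number $0$: the lone coin can still be moved to any lower unoccupied square, so such a position is a nim-heap whose Grundy value grows with $k$. Since the rules explicitly allow pushing one coin (and not necessarily both) over the edge, the options of a two-coin position $(x,y)$ include one-coin positions of various nonzero Grundy values, and your mex computation must additionally verify that these values never equal $n$ and how they contribute to realizing the values $n'<n$. The point is not cosmetic: if, say, the left coin of $(0,1)$ could be removed while the right coin remains on square $1$, the resulting lone coin on square $1$ has Grundy value at least $1$, which is incompatible with $(0,1)\in G_{1,3}$ having Grundy number $1$; so the precise mechanism by which coins leave the board (in particular, that a coin can only be expelled by being pushed by the other coin as it moves) must be pinned down before the induction closes. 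This bookkeeping is missing from your sketch and from the paper alike; the rest of your plan --- the well-founded induction, the use of the three ``fromto'' lemmas to realize every $n'$, and the derivation of the reverse inclusion from the partition --- is exactly the intended argument.
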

\begin{proof}
This theorem is direct from Lemmas \ref{fromto1}, \ref{fromto2}, \ref{fromto3}, and \ref{fromto4}.
\end{proof}

By Theorem \ref{grundygame1}, we have formulas for the game of Definition \ref{definitionofstrap}, 
and by Theorem \ref{theoremofsumg}, we can find the $\mathcal{P}$-position of the game of Definition \ref{definitionofstrap2} by calculating the Grundy numbers.

\vspace{0.5cm}
\noindent {\bf Acknowledgement.}
 
%We would also like to thank the anonymous reviewers whose comments helped us %improve the presentation of the paper.

\end{document}